\documentclass[twoside, 12pt]{amsart}
\usepackage[headings]{fullpage}
\usepackage{enumerate}
\usepackage{bookmark}

\newtheorem{thm}{Theorem}
\newtheorem{lem}{Lemma}
\newtheorem{cor}{Corollary}

\renewcommand{\Re}{\mathbb R}
\renewcommand{\epsilon}{\varepsilon}
\newcommand{\Red}{\Re^d}
\newcommand{\Rem}{\Re^m}
\newcommand{\Ze}{\mathbb Z}
\newcommand{\B}{\mathbf B}
\newcommand{\Se}{\mathbb S}
\newcommand{\Sed}{\Se^{d-1}_2}

\newcommand{\st}{\colon}  
\newcommand{\spann}{\operatorname{span}}
\newcommand{\conv}{\operatorname{conv}}
\newcommand{\iprod}[2]{\left< #1,#2\right>}

\title{The {A}lon--{M}ilman {T}heorem for non-symmetric bodies}

\date{}
\author[M. Nasz\'odi]{M\'arton Nasz\'odi}
\thanks{
The author thanks Alexander Litvak and Nicole 
Tomczak-Jaegermann for the discussions on the topic we had some time ago. The 
research was partially supported by the
National Research, Development and Innovation Office (NKFIH) grant 
NKFI-K119670.
}
\address{Department of Geometry, E\"otv\"os Lor\'and University, 
Budapest, Hungary}
\email{marton.naszodi@math.elte.hu}

\keywords{Section, projection of convex body, Banach--Mazur distance, 
irreducible set, almost Euclidean section.}
\subjclass[2010]{(Primary) 52A21, 52A21 (Secondary) 46B20.}

\begin{document}
\begin{abstract}
A classical theorem of Alon and Milman states that any $d$ dimensional centrally
symmetric convex body has a projection of dimension
$m\geq e^{c\sqrt{\ln{d}}}$ which is either close to the $m$-dimensional 
Euclidean ball or to the $m$-dimensional cross-polytope.
We extended this result to non-symmetric convex bodies.
\end{abstract}

\maketitle

\section{Introduction}

Some fundamental results from the theory of normed spaces have been shown to 
hold in the more general setting of non-symmetric convex bodies. Dvoretzky's 
theorem \cite{Dv61, Mi71} was extended in \cite{LaMa75} and \cite{Gor88}; 
Milman's Quotient of Subspace theorem \cite{Mi85} and duality of entropy 
results were extended in \cite{MP00}. In this note, we extend the Alon--Milman 
Theorem.

A \emph{convex body} is a compact convex set in $\Red$ with non-empty interior. 
We denote the orthogonal projection onto a linear subspace $H$ or $\Red$ by 
$P_H$. For $p=1,2,\infty$, the closed unit ball of $\ell^d_p$ centered at 
the origin is denoted by $\B^d_p$. 
Let $K$ and $L$ be convex bodies in $\Red$ with $L=-L$. We define their 
\emph{distance} as
\[
d(K,L)=\inf\{\lambda>0 \st \ L\subset T(K-a)\subset \lambda L \mbox{ for some } 
a\in \Red \mbox{ and } T\in GL(\Red)\}.
\]
By compactness, this infimum is attained, and when $K=-K$, it is attained with 
$a=0$.

Alon and Milman \cite{AM83} proved the following theorem in the case when $K$ 
is centrally symmetric.
\begin{thm}\label{thm:AMassym}
  For every $\epsilon>0$ there is a constant $C(\epsilon)>0$ with the property 
  that in any dimension $d\in\Ze^+$, and for any convex body $K$ in 
  $\Red$, at least one of the following two statements hold:
  \begin{enumerate}[(i)]
  \item 
    there is an $m$-dimensional linear subspace $H$ of $\Red$ such that
    $d(P_H(K),\B_2^m)<1+\epsilon$, for some $m$ satisfying $\ln\ln m\geq 
    \frac{1}{2}\ln\ln d$, or
  \item 
    there is an $m$-dimensional linear subspace $H$ such that
    $d(P_H(K),\B_1^m)<1+\epsilon$, for some $m$ satisfying $\ln\ln m\geq 
    \frac{1}{2}\ln\ln d - C(\epsilon)$.
  \end{enumerate}
\end{thm}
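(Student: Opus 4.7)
The natural plan is to reduce to the classical centrally symmetric case via the difference body $K_0 = K-K$, which is automatically centrally symmetric, and to exploit the identity $P_H(K_0) = P_H(K) - P_H(K)$ for every linear subspace $H$, so that a projection of $K_0$ close to a symmetric target gives information about the difference body of the corresponding projection of $K$. First, I would apply the original Alon--Milman theorem to $K_0$ to obtain an $m^*$-dimensional subspace $H \subset \Red$ with $\ln\ln m^* \geq \frac{1}{2}\ln\ln d$ on which $P_H(K) - P_H(K)$ is $(1+\epsilon')$-close to either $\B_2^{m^*}$ or $\B_1^{m^*}$ (for some $\epsilon'$ chosen small with respect to $\epsilon$).

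The heart of the argument is then transferring this structural information from $K_0$ back to $K$. In the Euclidean alternative, $P_H(K)$ sits inside an almost-Euclidean environment determined by $P_H(K_0)$, and I would pass to a sub-projection $H' \subset H$ via a non-symmetric Dvoretzky-type argument in the spirit of \cite{LaMa75} and \cite{Gor88} (possibly combined with the non-symmetric quotient-of-subspace machinery of \cite{MP00}) operating in this nearly-Euclidean environment; the goal is to produce a sub-projection of dimension $m$ on which $P_{H'}(K)$ is $(1+\epsilon)$-close to $\B_2^m$ while preserving the bound $\ln\ln m \geq \frac{1}{2}\ln\ln d$ of~(i). In the cross-polytope alternative, the near-$\B_1^{m^*}$ structure of $P_H(K) - P_H(K)$ forces the extreme points of $P_H(K)$ to cluster along a small number of antipodal directions---the vertex directions of $\B_1^{m^*}$---and a combinatorial selection of a coordinate sub-projection together with a suitable translation, in the spirit of the irreducibility-of-contact-points technique suggested by the paper's keywords, should yield a projection of $K$ that is $(1+\epsilon)$-close to $\B_1^m$. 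The extra combinatorial work in the cross-polytope case costs an additive constant $C(\epsilon)$ on the $\ln\ln$-scale, which accounts for the asymmetry between the two alternatives in the statement.

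The main obstacle is the cross-polytope alternative. The condition $P_H(K)-P_H(K) \approx \B_1^{m^*}$ does not by itself imply that $P_H(K)$ is close to $\B_1^{m^*}$: a non-symmetric body (a simplex, for example) can have a difference body that looks symmetric while being far from symmetric itself. The proof must therefore produce a genuine $(1+\epsilon)$-approximate central symmetry on some sub-projection of $K$, not merely on its difference body, while keeping $m$ at the scale $\frac{1}{2}\ln\ln d - C(\epsilon)$, and this extraction---together with the control of the dimension loss---is the technical heart of the argument.
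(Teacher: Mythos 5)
Your overall strategy --- pass to the difference body $D=K-K$, apply the symmetric Alon--Milman theorem to $D$, and use $P_H(K)-P_H(K)=P_H(D)$ to transfer the conclusion back to $K$ --- is exactly the paper's reduction, and your plan for the Euclidean alternative is essentially the paper's Lemma~\ref{lem:closetoball}: since $P_H(K)^\ast\supseteq\bigl(P_H(D)\bigr)^\ast\supseteq\B_2^{m^*}$ and $M\bigl(P_H(K)^\ast\bigr)=\tfrac12\int h_{P_H(D)}\,d\sigma$ is bounded below, the non-symmetric form of Milman's theorem applied to the polar yields an almost-Euclidean projection of $P_H(K)$ of dimension $k\geq c\,m^*$, which is a harmless linear loss on the $\ln\ln$ scale. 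So case (i) is fine as a plan.

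The gap is precisely where you locate it: the cross-polytope alternative. You correctly observe that $P_H(K)-P_H(K)\approx\B_1^{m^*}$ does not formally imply $P_H(K)\approx\B_1^{m^*}$, but your proposed fix --- a combinatorial selection of a coordinate sub-projection based on clustering of extreme points, at the cost of an additional $C(\epsilon)$ on the $\ln\ln$ scale --- is not an argument, and its guiding intuition is wrong on two counts. First, no sub-projection and no dimension loss are needed at all: the paper's key insight is that $\B_1^{m}$ is a \emph{double cone} $\conv(T_1\cup\{\pm e\})$, and double cones are \emph{irreducible}: if $T=L-L$ for a double cone $T$, then $L$ is a translate of $T/2$. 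Lemma~\ref{lem:closetocube} is a stability version of this: if $T\subseteq L-L\subseteq\delta T$ with $1\leq\delta<3/2$, then after a translation $(\tfrac32-\delta)T\subseteq L\subseteq(\delta-\tfrac12)T$, so taking $\delta=1+\epsilon'$ with $\epsilon'$ small forces $L$ itself to be $(1+\epsilon)$-close to $T$. The proof is short and entirely geometric: translating so that $o,e\in L$, one has $L\subseteq(L-L)\cap(L-L+e)\subseteq\delta T\cap(\delta T+e)$, and a slice-by-slice computation in the hyperplanes $\lambda e+e^\perp$ identifies $\delta T\cap(\delta T+e)$ with $\tfrac{e}{2}+(\delta-\tfrac12)T$; the lower inclusion then follows from $T\subseteq L-L$. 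Your worry about the simplex is answered by this very lemma: a simplex cannot arise here because its difference body is \emph{not} close to a cross-polytope. Second, the additive constant $C(\epsilon)$ in alternative (ii) is already present in the symmetric Alon--Milman theorem and is simply inherited; it is not the price of any extra work in the non-symmetric reduction, and the paper obtains the almost-$\ell_1^m$ projection in the \emph{same} dimension $m^*$ produced by the symmetric result. Without a precise substitute for the stability-of-irreducibility lemma, your case (ii) remains unproved.
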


The main contribution of the present note is a way to deduce 
Theorem~\ref{thm:AMassym} from the original result of Alon and Milman, that is, 
the centrally symmetric case.
By polarity, one immediately obtains

\begin{cor}
  For every $\epsilon>0$ there is a constant $C(\epsilon)>0$ with the property 
  that in any dimension $d\in\Ze^+$, and for any convex body $K$ in 
  $\Red$ containing the origin in its interior, at least one of the following 
two statements hold:
  \begin{enumerate}[(i)]
  \item 
    there is an $m$-dimensional linear subspace $H$ of $\Red$ such 
that
    $d(H\cap K,\B_2^m)<1+\epsilon$, for some $m$ satisfying 
    $\ln\ln m\geq \frac{1}{2}\ln\ln d$, or
  \item 
    there is an $m$-dimensional linear subspace $H$ such that
    $d(H\cap K,\B_{\infty}^m)<1+\epsilon$, for some $m$ satisfying 
    $\ln\ln m\geq \frac{1}{2}\ln\ln d - C(\epsilon)$.
  \end{enumerate}
\end{cor}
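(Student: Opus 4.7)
The plan is to apply Theorem~\ref{thm:AMassym} to the polar body $K^\circ$ (with respect to the origin), which is a well-defined convex body in $\Red$ since $0 \in \mathrm{int}(K)$. The theorem supplies an $m$-dimensional linear subspace $H \subset \Red$, with the same dimensional bounds as in the theorem, and an index $p \in \{1, 2\}$ such that $d(P_H(K^\circ), \B_p^m) < 1 + \epsilon$.

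The bridge between projections of $K^\circ$ and sections of $K$ is the standard identity $P_H(K^\circ) = (H \cap K)^\circ$, where the polar on the right is computed within the subspace $H$. This follows by a support-function comparison: for $y \in H$ the support function of $P_H(K^\circ)$ at $y$ equals $\|y\|_K = \|y\|_{H \cap K}$, which is also the support function of $(H \cap K)^\circ$ inside $H$. Thus the ``almost-ball'' projection of $K^\circ$ is exactly the polar (in $H$) of the corresponding section of $K$.

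The last step converts closeness of $(H \cap K)^\circ$ to $\B_p^m$ into closeness of $H \cap K$ to $(\B_p^m)^\circ$: since $(\B_2^m)^\circ = \B_2^m$ and $(\B_1^m)^\circ = \B_\infty^m$, this lands precisely in the two alternatives of the corollary. Concretely, one takes polars within $H$ of the inclusion $\B_p^m \subset T(P_H(K^\circ) - a) \subset (1+\epsilon)\B_p^m$ provided by the theorem, using $(TM)^\circ = (T^{-1})^\ast M^\circ$ together with the self-polarity of $\B_2^m$ and the duality $(\B_1^m)^\circ = \B_\infty^m$. The main technical obstacle is the interaction between polarity and the translation by $a$ present in the non-symmetric definition of $d$: the polar of $P_H(K^\circ) - a$ taken about the origin is only a projective image of $H \cap K$, not an affine one. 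The remedy is to polarize with respect to the point $a$ itself (the centre of the approximating ball) rather than the origin; the transformed inclusion then reads as an affine sandwiching of the corresponding dual body between concentric dilates of $T^\ast \B_{p'}^m$, and unwinding this back to an affine image of $H \cap K$ yields the required bound $d(H \cap K, \B_{p'}^m) < 1+\epsilon$.
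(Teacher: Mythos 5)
Your overall route is the paper's: apply Theorem~\ref{thm:AMassym} to $K^\ast$, identify $P_H(K^\ast)$ with $(H\cap K)^\ast$ (polarity taken inside $H$), and use $(\B_2^m)^\ast=\B_2^m$, $(\B_1^m)^\ast=\B_\infty^m$. You also correctly isolate the one genuine difficulty: the distance $d$ permits a translation $a$, and polarity about the origin does not commute with translation. The problem is that your proposed remedy does not remove this difficulty. Write $N=H\cap K$ and $W=T^{-1}\B_p^m$, so the theorem gives $a+W\subseteq N^\ast\subseteq a+(1+\epsilon)W$. Polarizing about $a$ is the same operation as taking the origin-polar of $N^\ast-a$, and it produces
\[
\frac{1}{1+\epsilon}\,W^\ast\ \subseteq\ (N^\ast-a)^\ast\ \subseteq\ W^\ast,
\]
which is indeed a sandwich between concentric dilates --- but of the body $(N^\ast-a)^\ast=\{y\st \|y\|_N\leq 1+\iprod{y}{a}\}$, which for $a\neq 0$ is a \emph{projective}, not affine, image of $N$. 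The ``unwinding'' you invoke at the end is exactly the missing step: a projective map does not carry a pair of homothets with ratio $1+\epsilon$ to a pair of homothets, so the displayed sandwich gives no bound on $d(N,\B_{p'}^m)$ in the sense defined in the paper. Concretely, take $N^\ast=a+[-1,1]^2$ with $a=(t,0)$, $0<t<1$: then $d(N^\ast,\B_\infty^2)=1$, yet $N=(N^\ast)^\ast$ is the quadrilateral with vertices $(\tfrac{1}{1+t},0)$, $(0,1)$, $(-\tfrac{1}{1-t},0)$, $(0,-1)$, which is not a parallelogram, so $d(N,\B_1^2)>1$. Hence the implication ``$d(N^\ast,\B_p^m)\leq\lambda$ implies $d(N,(\B_p^m)^\ast)\leq\lambda$'' that your argument ultimately relies on is false in general.

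To close the gap one has to control the translation, and this requires looking inside the proof of Theorem~\ref{thm:AMassym} rather than quoting its statement. In case (i) the fix is available: Lemma~\ref{lem:closetoball} rests on Lemma~\ref{lem:milmann}, which sandwiches the relevant section of $(P_H K^\ast)^\ast$ between balls centred at the \emph{origin}; hence the almost-Euclidean projection of $K^\ast$ is witnessed with $a=0$, and origin-polarity transfers it cleanly to the section $H\cap K$. In case (ii), however, the centre produced by Lemma~\ref{lem:closetocube} is $e/2\neq 0$, and one genuinely needs a dual argument --- for instance a stability statement for the irreducibility of the cube under the operation $N\mapsto(N^\ast-N^\ast)^\ast$, applied to $N=H\cap K$, for which $(N^\ast-N^\ast)^\ast=(K^\ast-K^\ast)^\ast\cap H$ is close to $\B_\infty^m$. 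Your write-up identifies the obstacle but does not contain an argument that overcomes it.
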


\section{Proof of Theorem~\ref{thm:AMassym}}\label{sec:AM}

For a convex body $K$ in $\Red$, we denote its polar by $K^\ast=\{x\in\Red\st 
\iprod{x}{y}\leq 1\mbox{ for all } y\in K\}$. The \emph{support function} of 
$K$ is $h_K(x)=\sup\{\iprod{x}{y}\st y\in K\}$. For basic properties, see 
\cite{Sch14, GiEtAl14}.

First in Lemma~\ref{lem:closetoball}, by a standard argument, we show that if 
the \emph{difference body} $L-L$ of a convex body $L$ is close to the Euclidean 
ball, then so is some linear dimensional section of $L$. For this, we need 
Milman's theorem whose proof (cf. \cite{Mi71, FLM77, MiSch86}) does not use the 
symmetry of $K$ even if it is stated with that assumption.

\begin{lem}[Milman's Theorem]\label{lem:milmann}
  For every $\epsilon>0$ there is a constant $C(\epsilon)>0$ with the property 
  that in any dimension $d\in\Ze^+$, and for any convex body $K$ in 
  $\Red$ with $\B_2^d\subseteq K$, there is an $m$-dimensional 
  linear subspace $H$ of $\Red$ such that
  $(1-\varepsilon)r(\B_2^d\cap H)\subseteq K \subseteq 
(1+\varepsilon)r(\B_2^d\cap H)$, for some $m$ satisfying 
  $m\geq C(\epsilon)M^2d$, where 
  \[M=M(K)=\int\limits_{\Sed} ||x||_K d\sigma(x),\]
  and $r=\frac{1}{M}$.
\end{lem}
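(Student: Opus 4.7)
The plan is to run the standard Dvoretzky--Milman argument for a random section of $K$ and to observe that every step only uses that the Minkowski gauge $\|\cdot\|_K$ of $K$ is Lipschitz, subadditive, and positively homogeneous; none of these properties require $K$ to be centrally symmetric, so the classical proof transfers verbatim.

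Set $\|x\|_K = \inf\{t>0 \st x\in tK\}$. By convexity of $K$ this gauge is subadditive and positively homogeneous, and the assumption $\B_2^d\subseteq K$ forces $\|x\|_K\leq \|x\|_2$. The bounds $\|x\|_K-\|y\|_K\leq \|x-y\|_K\leq \|x-y\|_2$ and $\|y\|_K-\|x\|_K\leq \|y-x\|_K\leq \|x-y\|_2$ then show that $\|\cdot\|_K$ is $1$-Lipschitz on $\Red$ with respect to the Euclidean norm, and its mean over $\Sed$ is exactly $M$ by definition.

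First I would apply L\'evy's isoperimetric inequality on $\Sed$ to the $1$-Lipschitz function $\|\cdot\|_K$ to get the concentration estimate
\[
\sigma\bigl\{x\in\Sed \st \bigl|\|x\|_K - M\bigr| > \eta\bigr\}\leq 2\exp(-c\eta^2 d)
\]
with some absolute constant $c>0$. Next I would fix an arbitrary $m$-dimensional subspace $H_0\subseteq\Red$ and a $\delta$-net $\NN$ of cardinality at most $(3/\delta)^m$ in $\Sed\cap H_0$. For a Haar-random rotation $U\in O(d)$, each $Ux$ with $x\in\NN$ is uniformly distributed on $\Sed$, so by the concentration estimate and a union bound
\[
\Pr\bigl[\exists x\in\NN \st \bigl|\|Ux\|_K - M\bigr| > \eta\bigr]\leq 2(3/\delta)^m\exp(-c\eta^2 d).
\]
Tuning $\eta$ and $\delta$ to be of order $\epsilon M$, this probability drops below $1$ as soon as $m\leq C(\epsilon)M^2 d$, and then some rotation $U$ produces a subspace $H=UH_0$ on which the concentration estimate holds at every point of the rotated net $U\NN\subseteq\Sed\cap H$.

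Finally I would upgrade the net estimate to the whole sphere $\Sed\cap H$ via the $1$-Lipschitz property: for any $y\in\Sed\cap H$, pick $x\in U\NN$ with $\|y-x\|_2\leq\delta$; then $|\|y\|_K-\|x\|_K|\leq\delta$, and hence $|\|y\|_K-M|\leq\delta+\eta\leq\epsilon M$ after the tuning. Since $\Sed\cap H$ is antipodally symmetric, the same estimate applies at both $y$ and $-y$, and by positive homogeneity it rephrases as the desired sandwich of $K\cap H$ between $(1\pm\epsilon)$ multiples of $r(\B_2^d\cap H)$ with $r=1/M$. The only point to check is that non-symmetry does not break either of the two substantive ingredients: L\'evy's inequality needs only a Lipschitz function, and the net-to-sphere step uses only subadditivity and positive homogeneity of $\|\cdot\|_K$. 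Neither step invokes $\|-x\|_K=\|x\|_K$, so the classical symmetric proof goes through unchanged --- which is precisely the remark that makes the lemma usable in the non-symmetric setting that follows.
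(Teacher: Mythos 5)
The paper does not actually prove this lemma: it quotes Milman's theorem from the literature and merely remarks that the known proofs (Milman, Figiel--Lindenstrauss--Milman, Milman--Schechtman) never use the symmetry of $K$. Your proposal supplies the standard concentration-of-measure proof and correctly isolates why symmetry is irrelevant: the gauge of a convex body containing $\B_2^d$ is positively homogeneous, subadditive and $1$-Lipschitz, which is all that L\'evy's inequality and the net argument require, and the final sandwich only needs the two-sided estimate on all of $\Sed\cap H$ (which covers $y$ and $-y$ simultaneously even though $\|y\|_K\neq\|-y\|_K$). So in spirit your argument is exactly the verification the paper is implicitly relying on.

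There is, however, one quantitative wrinkle in your net-to-sphere step that prevents you from reaching the stated bound $m\geq C(\epsilon)M^2d$. You estimate $\bigl|\|y\|_K-\|x\|_K\bigr|\leq\delta$ and then demand $\delta+\eta\leq\epsilon M$, which forces $\delta$ to be of order $\epsilon M$. The net then has cardinality $(3/\delta)^m=(3/(c\epsilon M))^m$, and the union bound only closes when $m\lesssim \epsilon^2M^2d/\ln(1/(\epsilon M))$; since $M$ can be as small as a power of $1/\sqrt d$, this loses a factor of order $\ln d$ against the claimed $C(\epsilon)M^2d$. The standard repair is to bound $\|y-x\|_K\leq \delta\cdot A$ where $A=\sup\{\|z\|_K\st z\in\Sed\cap H\}$, obtain the self-improving inequality $A\leq M+\eta+\delta A$, hence $A\leq (M+\eta)/(1-\delta)$, and similarly for the infimum. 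This lets you take $\delta$ of order $\epsilon$ (independent of $M$), so the net has cardinality $(C/\epsilon)^m$ and the union bound yields $m\geq C(\epsilon)M^2d$ as stated; note that this refinement again uses only subadditivity, not symmetry. For the paper's actual application in Lemma~\ref{lem:closetoball} the quantity $M$ is bounded below by $\frac{1}{2(1+\alpha)}$, so your weaker bound would in fact suffice there, but to prove the lemma as stated you need the self-improving version of the net step.
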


\begin{lem}\label{lem:closetoball}
 Let $\alpha,\varepsilon>0$ be given. Then there is a constant 
$c=c(\alpha,\varepsilon)$ with the property that in any dimension $m\in\Ze^+$, 
and for any convex body $L$ in $\Rem$ with $d(L-L,\B_2^m)<1+\alpha$, there is a 
$k$ dimensional linear subspace $F$ of $\Rem$ such that 
$d(P_F(L),\B_2^k)<1+\epsilon$ for some $k\geq cm$.
\end{lem}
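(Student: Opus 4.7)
The plan is to convert the projection problem to a section problem via polar duality and then invoke Lemma~\ref{lem:milmann}.

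First I would normalize by the $GL$-transformation promised by the hypothesis: there exists $T\in GL(\Rem)$ with $\B_2^m\subseteq T(L-L)\subseteq (1+\alpha)\B_2^m$ (no translation is needed since $L-L$ is centrally symmetric). Replacing $L$ by $TL$ is harmless, for every $k$-dimensional projection of $TL$ equals an element of $GL(k)$ applied to a $k$-dimensional projection of $L$: writing $P_F\circ T=U\circ P_{F'}$ with $F'=(\ker(P_FT))^{\perp}$ and $U\st F'\to F$ the induced isomorphism, one has $P_F(TL)=U(P_{F'}L)$, and $d(\cdot,\B_2^k)$ is $GL$-invariant. Next I would translate $L$ by an interior point so that $0\in\operatorname{int}(L)$; since $\operatorname{diam}(L)\le 1+\alpha$, this forces $L\subseteq(1+\alpha)\B_2^m$.

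The inclusion $\B_2^m\subseteq L-L$ says $h_L(x)+h_L(-x)\ge 1$ for every $x\in\Sem$, so integration gives
\[M_L\st= \int_{\Sem} h_L(x)\,d\sigma(x)\ge \frac{1}{2},\]
and this integral is translation-invariant. Now set $K\st=(1+\alpha)L^{\ast}$: the inclusion $L\subseteq(1+\alpha)\B_2^m$ gives $\B_2^m\subseteq K$, and the identity $\|x\|_{L^{\ast}}=h_L(x)$ (valid since $0\in\operatorname{int}(L)$) gives
\[M(K)=\frac{M_L}{1+\alpha}\ge \frac{1}{2(1+\alpha)}.\]
Applying Lemma~\ref{lem:milmann} to $K$ with parameter $\varepsilon'\st=\varepsilon/3$ produces a $k$-dimensional subspace $H$ with $k\ge C(\varepsilon')M(K)^{2}m\ge c(\alpha,\varepsilon)\,m$, on which $K\cap H$ lies between $(1-\varepsilon')r$ and $(1+\varepsilon')r$ times $\B_2^m\cap H$, where $r=1/M(K)$.

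The last step is to polarize the sandwich inside $H$, using the classical identity $(L^{\ast}\cap H)^{\circ}_{H}=P_H L$. Dividing through by $1+\alpha$ and then polarizing in $H$ inverts the inclusions and yields
\[d(P_H L,\B_2^k)\le \frac{1+\varepsilon'}{1-\varepsilon'}<1+\varepsilon.\]
Undoing the $GL$-normalization returns the desired subspace for the original $L$.

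The main conceptual point, and the one place where carelessness would kill the proof, is that the hypothesis controls only $L-L$, not $L$ itself: a convex body can have near-round difference body while being rather flat, so one cannot in general place $L$ between two Euclidean balls of comparable radius, and $L^{\ast}$ need not be bounded by a dimension-free constant. The resolution is the observation that the mean width of $L$, equivalently $2M_L$, is controlled from below directly by the width condition on $L-L$, and this is exactly the quantity that feeds the dimension bound in Milman's theorem once we dualize.
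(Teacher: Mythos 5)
Your proposal is correct and follows essentially the same route as the paper: dualize, bound $M(L^{\ast})$ from below via $h_L(x)+h_L(-x)=h_{L-L}(x)$ and the sandwich on $L-L$, apply Milman's theorem to the polar, and polarize back on the subspace to convert the section into a projection. The only differences are bookkeeping (your normalization $\B_2^m\subseteq L-L\subseteq(1+\alpha)\B_2^m$ versus the paper's $\frac{1}{\delta}\B_2^m\subseteq L-L\subseteq\B_2^m$, and your explicit $\varepsilon'=\varepsilon/3$ adjustment, which is actually slightly more careful than the paper's).
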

\begin{proof}
 Let $\delta=d(L-L,\B_2^m)$. We may assume that 
$\frac{1}{\delta}\B_2^m\subseteq 
L-L\subseteq \B_2^m$. Thus, $h_{L-L}\geq\frac{1}{\delta}$. With the notations 
of 
Lemma~\ref{lem:milmann}, we have
\begin{equation}
M(L^\ast)=\int\limits_{\Sed} ||x||_{L^\ast} d\sigma(x)=
\frac{1}{2}\int\limits_{\Sed} h_L(x)+h_L(-x) d\sigma(x)
\end{equation}
\[
=\frac{1}{2}\int\limits_{\Sed} h_{L-L}(x) d\sigma(x)
\geq\frac{1}{2\delta}\geq\frac{1}{2(1+\alpha)}.
\]

Note that $L^\ast\supset (L-L)^\ast\supset \B^d_2$, thus, by 
Lemma~\ref{lem:milmann} and polarity, we obtain that $L$ has a $k$ dimensional 
projection $P_F$ with $d(P_F L,\B^d_2\cap F)\leq 1+\varepsilon$ and $k\geq 
C(\varepsilon)\frac{1}{4(1+\alpha)^2}m$. Here, $C(\varepsilon)$ is the same as 
in Lemma~\ref{lem:milmann}.
\end{proof}

The novel geometric idea of our proof is the following.
We call a convex body $T=\conv\left( T_1\cup\{\pm e\} \right)$ in $\Re^m$ a 
\emph{double cone} if $T_1=-T_1$ is convex set, $\spann T_1$ is an 
$(m-1)$-dimensional linear subspace, and $e\in\Re^m\setminus\spann T_1$.
Double cones are \emph{irreducible convex bodies}, that is, for any double cone 
$T$, if $T=L-L$ then $L=T/2$, see \cite{NaVi03, Yo91}. We prove a stability 
version of this fact.

\begin{lem}[Stability of irreducibility of double cones]\label{lem:closetocube}
 Let $L$ be a convex body in $\Rem$ with $m\geq 2$, and $T$ be a double cone of 
the form $T=\conv\left( T_1\cup\{\pm e\} \right)$. Assume that $T\subseteq 
L-L\subseteq \delta T$ for some $1\leq \delta<\frac{3}{2}$. Then
\begin{equation*}
\left(\frac{3}{2}-\delta\right)T\subseteq L-a\subseteq 
\left(\delta-\frac{1}{2}\right)T.
\end{equation*}
for some $a\in\Rem$.
\end{lem}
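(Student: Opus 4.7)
The plan is to take $a$ to be the midpoint of a witness of the inclusion $e\in L-L$. Because $e\in T\subseteq L-L$, one can pick $p,q\in L$ with $p-q=e$ and set $a:=(p+q)/2$, which lies in $L$ by convexity. Writing $K:=L-a$, one has $p-a=e/2$ and $q-a=-e/2$ in $K$, together with $0\in K$, so the segment $[-e/2,e/2]$ is contained in $K$. Both inclusions in the conclusion will be extracted from the slice description of the double cone: letting $\varphi$ denote the linear functional with $\varphi(e)=1$ and $\varphi\equiv 0$ on $\spann T_1$, the slice of $\lambda T$ at $\{\varphi=\tau\}$ is $(\lambda-|\tau|)T_1+\tau e$ for $|\tau|\leq\lambda$; equivalently, $h_T(\xi)=\max\{h_{T_1}(\xi),|\iprod{\xi}{e}|\}$ for every $\xi\in\Rem$.

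For the upper inclusion I would start from $L-L\subseteq\delta T$ together with $p,q\in L$, which give $L\subseteq p+\delta T$ and $L\subseteq q+\delta T$, hence
\[
K\subseteq \left(\tfrac{e}{2}+\delta T\right)\cap\left(-\tfrac{e}{2}+\delta T\right).
\]
The slice description shows the right-hand side equals $(\delta-1/2)T$: the slice at height $\tau$ is $(\delta-\max\{|\tau+1/2|,|\tau-1/2|\})T_1=(\delta-1/2-|\tau|)T_1$ for $|\tau|\leq\delta-1/2$, which is exactly the slice of $(\delta-1/2)T$. Therefore $K\subseteq(\delta-1/2)T$.

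For the lower inclusion I would pass to support functions and collect three facts valid for every $\xi$: (i) $h_K(\xi)\geq|\iprod{\xi}{e}|/2$, since $\pm e/2\in K$; (ii) $h_K(\xi)+h_K(-\xi)\geq h_T(\xi)$, from $L-L\supseteq T$; and (iii) $h_K(\xi)\leq(\delta-1/2)h_T(\xi)$, from the upper inclusion just established. A two-case analysis on the formula $h_T(\xi)=\max\{h_{T_1}(\xi),|\iprod{\xi}{e}|\}$ finishes the job. If $|\iprod{\xi}{e}|\geq h_{T_1}(\xi)$, then $h_T(\xi)=|\iprod{\xi}{e}|$ and (i) alone gives $h_K(\xi)\geq h_T(\xi)/2\geq(3/2-\delta)h_T(\xi)$, using $\delta\geq 1$. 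Otherwise $h_T(\xi)=h_{T_1}(\xi)=h_T(-\xi)$, and combining (ii) with (iii) applied at $-\xi$ yields $h_K(\xi)\geq h_T(\xi)-h_K(-\xi)\geq h_T(\xi)-(\delta-1/2)h_T(\xi)=(3/2-\delta)h_T(\xi)$. Either way $K\supseteq(3/2-\delta)T$. The only step that I expect to take real geometric care is the slice identity $(e/2+\delta T)\cap(-e/2+\delta T)=(\delta-1/2)T$; once it is in hand, the lower inclusion reduces to routine support-function bookkeeping.
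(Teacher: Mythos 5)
Your proof is correct and follows essentially the same route as the paper: center $L$ at the midpoint of a chord realizing $e$, identify $\left(\tfrac{e}{2}+\delta T\right)\cap\left(-\tfrac{e}{2}+\delta T\right)=\left(\delta-\tfrac12\right)T$ by slicing with hyperplanes parallel to $\spann T_1$, and obtain the lower inclusion by cancelling against the upper one. The only difference is cosmetic: your case analysis using $\pm e/2\in K$ is redundant, since facts (ii) and (iii) alone give $h_K(\xi)\geq h_T(\xi)-\left(\delta-\tfrac12\right)h_T(-\xi)=\left(\tfrac32-\delta\right)h_T(\xi)$ for every $\xi$ because $h_T$ is even.
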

\begin{proof}
 By the assumptions, $e\in T\subseteq L-L$, thus, by translating $L$, we may 
assume that $o, e\in L$.
 Thus, 
\begin{equation}
 L\subseteq (L-L)\cap(L-L+e)\subseteq \delta T \cap (\delta T + e).
\end{equation}
We claim that
\begin{equation}\label{eq:dtcapdt}
 \delta T \cap (\delta T + e) = \frac{e}{2}+\left(\delta-\frac{1}{2}\right)T.
\end{equation}
Indeed, let $H_\lambda$ denote the hyperplane $H_\lambda=\lambda e+ e^{\perp}$. 
To prove \eqref{eq:dtcapdt}, we describe the sections of the right hand side 
and the left hand side by the hyperplanes $H_\lambda$ for all relevant values 
of $\lambda$.
For any $\lambda\in [-\delta,\delta]$, we have
\[\delta T\cap H_\lambda= \delta (T\cap H_{\lambda/\delta})=\lambda e + 
\delta \left(1-\frac{|\lambda|}{\delta }\right)T_1.\]
For any $\lambda\in [-\delta +1,\delta +1]$, we have
\[(\delta T+e)\cap H_\lambda=e+(\delta T\cap H_{\lambda-1})=
\lambda e + \delta \left(1-\frac{|\lambda-1|}{\delta }\right)T_1.
\]
Thus, for any $\lambda\in [-\delta +1,\delta]$, we have
\[\delta T\cap(\delta T+e)\cap H_\lambda=
\lambda e + \delta \left(1-\frac{1}{\delta 
}\max\{|\lambda|,|\lambda-1|\}\right)T_1.\]

On the other hand, for any $\lambda\in [-\delta+1,\delta]$, we have
\[(e/2+(\delta-1/2)T)\cap H_\lambda=\lambda e + 
(\delta-1/2)\left(1-\frac{|\lambda-1/2|}{\delta-1/2}\right)T_1.\]
Combining these two equations yields \eqref{eq:dtcapdt}.

Thus, 
\[
T\subseteq L-L= \left(L-\frac{e}{2}\right) - 
\left(L-\frac{e}{2}\right)\subseteq \left(L-\frac{e}{2}\right)- 
\left(\delta-\frac{1}{2}\right)T.
\]
Using the fact that $T=-T$, and $1\leq \delta<3/2$, we obtain
\[
 \left(\frac{3}{2}-\delta\right)T\subseteq L-\frac{e}{2},
\]
finishing the proof of Lemma~\ref{lem:closetocube}.
\end{proof}

Now, we are ready to prove Theorem~\ref{thm:AMassym}. With the notations of the 
theorem, let $D=K-K$, and apply the symmetric version of the theorem for $D$ in 
place of $K$. We may assume that $\varepsilon<1/2$.
In case (i), we use Lemma~\ref{lem:closetoball} and loose a linear factor in 
the dimension of the almost-euclidean projection.
In case (ii), we use Lemma~\ref{lem:closetocube} with $T=\B^m_1$ and obtain the 
same dimension for the almost-$\ell_1^m$ projection.


\bibliographystyle{alpha}
\bibliography{biblio}

\end{document}